\numberwithin{table}{section}
\numberwithin{equation}{section}
\theoremstyle{plain}
\newtheorem{theorem}{Theorem}[section]
\newtheorem{proposition}[theorem]{Proposition}
\newtheorem{definition}[theorem]{Definition}
\newtheorem{example}[theorem]{Example}
\newtheorem{remark}[theorem]{Remark}
\author[1,*]{ \textbf{Bryan S. Hernandez}}
 \author[1]{\textbf{Ralph John L. De la Cruz}} 
\affil[1]{\small \textit{Institute of Mathematics, University of the Philippines Diliman, Quezon City 1101, Philippines }}
\affil[*]{Corresponding author: \texttt{bryan.hernandez@upd.edu.ph}}
\title{\textbf{Independent Decompositions of Chemical Reaction Networks}}
\date{\normalsize (2020)}
\begin{document}
\maketitle
\thispagestyle{empty}
\begin{abstract}
	A chemical reaction network (CRN) is composed of reactions that can be seen as interactions among entities called species, which exist within the system. Endowed with kinetics, CRN has a corresponding set of ordinary differential equations (ODEs). In Chemical Reaction Network Theory, we are interested with connections between the structure of the CRN and qualitative properties of the corresponding ODEs. One of the results in Decomposition Theory of CRNs is that the intersection of the sets of positive steady states of the subsystems is equal to the set of positive steady states of the whole system, if the decomposition is independent. Hence, computational approach using independent decompositions can be used as an efficient tool in studying large systems.
In this work, we provide a necessary and sufficient condition for the existence of a nontrivial independent decomposition of a CRN, which leads to a novel step-by-step method to obtain such decomposition, if it exists. We also illustrate these results using real-life examples. In particular, we show that a CRN of a popular model of anaerobic yeast fermentation pathway has a nontrivial independent decomposition, while a particular biological system, which is a metabolic network with one positive feedforward and a negative feedback has none. Finally, we analyze properties of steady states of reaction networks of specific influenza virus models.
\end{abstract}
\baselineskip=0.30in
\section{Introduction}
\label{sec:1}
A chemical reaction network (CRN) is composed of reactions which can be seen as interactions among species existing within the system.
A CRN endowed with a chemical kinetics is called a chemical kinetic system (CKS). This system has a corresponding set of ordinary differential equations that describes its dynamics over time. Our goal in studying Chemical Reaction Network Theory (CRNT) is to explore relationships between the network structure and qualitative properties of the corresponding differential equations.
From the past decades, several papers considered CRNT in studying dynamical systems. It was presumed to be a promising framework for understanding of complex biological systems based on structural properties \cite{interferon}. In 2015, Arceo et al. provided a framework to represent a Biochemical Systems Theory (BST) model as a CRN endowed with power-law kinetics \cite{arceo2015}. 
Recent results involve the possibility of multistationarity, i.e., existence of multiple steady states for particular rate constants, in very complex continuous flow stirred tank reactors and to the possibility of traveling composition waves on isothermal catalyst surfaces \cite{feinberg:some}. In fact, CRNT is also used to study multistationarity in models of carbon cycle and biochemical processes \cite{fortun2,hmr2019}.

One among the important results in decomposition theory in CRNT relates the set of steady states of the whole system to the sets of steady states of its subsystems under a decomposition induced by partitioning the reaction set, which we call the Feinberg Decomposition Theorem. In general, the intersection of the sets of positive steady states of these subsystems is contained in the set of positive steady states of the system itself. If the decomposition of the underlying network is independent, i.e., the stoichiometric subspace of the network is equal to the direct sum of the stoichiometric subspaces of the subnetworks, the equality holds. The equality is important, in the sense that, under an independent decomposition, if a system has a subsystem which does not have the capacity for multistationarity, then so is the whole network. Hence, finding independent decompositions of a CRN, if they exist, leads to efficiency in computation. Decomposition theory, in particular independent ones, has potential and may be used to infer properties of larger networks from smaller ones. Previous studies on independent decompositions on a particular type called ``fundamental decompositions'' and those decompositions existing in literature were considered in \cite{bsh1} and \cite{bsh2}, respectively. In this work, we will focus on any decomposition and see whether there is such an independent decomposition existing in a network.

%In this paper, we will also deal with the problem of multistationarity of power-law kinetic systems, a generalization of mass action kinetic systems.
In 2020, Hernandez et al. \cite{hmr2019} provided a Multistationarity Algorithm for power-law kinetic systems, i.e., CRNs with power-law kinetics, which is an extension of the Higher Deficiency Algorithm of Ji and Feinberg \cite{ji} for mass action systems. From the name of the algorithm itself, it solves the problem of determining whether a power-law system has the capacity for multistationarity for particular constants within a stoichiometric class.

Given a CRN, we aim to do the following:
\begin{itemize}
	\item[1.] Provide conditions when a nontrivial independent decomposition of a CRN exists or does not exist.
	\item[2.] Construct a method that gives an independent decomposition, if it exists.
	\item[3.] Provide real-life examples to illustrate the preceding method such as influenza virus models and other areas.
	\item[4.] Relate the obtained results with existing deficiency theorems and multistationarity algorithms.
\end{itemize}

This paper is organized as follows. In Section \ref{prelim}, we provide important preliminaries as the fundamentals of CRNs and CKS, and discussion of deficiency theorems and decomposition theory. Section \ref{sec:results} deals with our main results and how these are applied to real-life examples and integrate with existing theorems and algorithms in the literature.
Finally, Section \ref{sec:sum} gives a summary and outlook for future directions.

\section{Preliminaries}
%\section{Fundamentals of Chemical Reaction Networks and Kinetic Systems}
\label{prelim}
\indent 
In this section, we consider important notions about chemical reaction networks and chemical kinetic systems \cite{arceo2015,feinberg,wiuf}. We also present results on the decomposition theory, which was introduced by Feinberg in \cite{feinberg12,feinberg:book}.

\subsection{Fundamentals of Chemical Reaction Networks}

\begin{definition}
	A {\bf chemical reaction network} (CRN) is the triple $\mathscr{N} = \left(\mathscr{S},\mathscr{C},\mathscr{R}\right)$ of the nonempty finite sets $\mathscr{S}$, $\mathscr{C} \subseteq \mathbb{R}_{\ge 0}^\mathscr{S}$, and $\mathscr{R} \subset \mathscr{C} \times \mathscr{C}$, of $m$ species, $n$ complexes, and $r$ reactions, respectively, that satisfy: $\left( {{C_i},{C_i}} \right) \notin \mathscr{R}$ for each $C_i \in \mathscr{C}$; and
	for each $C_i \in \mathscr{C}$, there exists $C_j \in \mathscr{C}$ such that $\left( {{C_i},{C_j}} \right) \in \mathscr{R}$ or $\left( {{C_j},{C_i}} \right) \in \mathscr{R}$.
\end{definition}
We can view $\mathscr{C}$ as a subset of $\mathbb{R}^m_{\ge 0}$. The ordered pair $\left( {{C_i},{C_j}} \right)$ corresponds to the familiar notation ${C_i} \to {C_j}$.

{
	\begin{definition}
		The {\bf molecularity matrix}, denoted by $Y$, is an $m\times n$ matrix such that $Y_{ij}$ is the stoichiometric coefficient of the species $X_i$ in complex $C_j$.
		The {\bf incidence matrix} $I_a$ is an $n\times r$ matrix such that 
		$${\left( {{I_a}} \right)_{ij}} = \left\{ \begin{array}{rl}
			- 1&{\rm{ if \ }}{C_i}{\rm{ \ is \ in \ the\ reactant \ of \ reaction \ }}{R_j},\\
			1&{\rm{  if \ }}{C_i}{\rm{ \ is \ in \ the\ product \ of \ reaction \ }}{R_j},\\
			0&{\rm{    otherwise}}.
		\end{array} \right.$$
		The {\bf stoichiometric matrix}, denoted by $N$, is the $m\times r$ matrix given by 
		$N=YI_a$.
	\end{definition}
}

\begin{definition}
	The {\bf reaction vectors} for a given reaction network $\left(\mathscr{S},\mathscr{C},\mathscr{R}\right)$ are the elements of the set $\left\{{C_j} - {C_i} \in \mathbb{R}^m|\left( {{C_i},{C_j}} \right) \in \mathscr{R}\right\}.$
\end{definition}

\begin{definition}
	The {\bf stoichiometric subspace} of a reaction network $\left(\mathscr{S},\mathscr{C},\mathscr{R}\right)$, denoted by $S$, is the linear subspace of $\mathbb{R}^m$ given by $$S = {\text{span}}\left\{ {{C_j} - {C_i} |\left( {{C_i},{C_j}} \right) \in \mathscr{R}} \right\}.$$ The {\bf rank} of the network, denoted by $s$, is given by $s=\dim S$. The set $\left( {x + S} \right) \cap \mathbb{R}_{ \ge 0}^m$ is said to be a {\bf stoichiometric compatibility class} of $x \in \mathbb{R}_{ \ge 0}^m$.
\end{definition}

\begin{definition}
	Two vectors $x, x^{*} \in {\mathbb{R}^m}$ are {\bf stoichiometrically compatible} if $x-x^{*}$ is an element of the stoichiometric subspace $S$.
\end{definition}

CRNs can be seen as directed graphs. One can view complexes as vertices and reactions as edges.
%At this point, if we are talking about geometric properties, {\bf vertices} are complexes and {\bf edges} are reactions.
If there is a path between two vertices $C_i$ and $C_j$, then they are said to be {\bf connected}. If there is a directed path from vertex $C_i$ to vertex $C_j$ and vice versa, then they are said to be {\bf strongly connected}. If any two vertices of a subgraph are {\bf (strongly) connected}, then the subgraph is said to be a {\bf (strongly) connected component}. The (strong) connected components are precisely the {\bf (strong) linkage classes} of a CRN. The maximal strongly connected subgraphs where there are no edges from a complex in the subgraph to a complex outside the subgraph is said to be the {\bf terminal strong linkage classes}.
We denote the number of linkage classes and the number of strong linkage classes by $l$ and $sl$, respectively.
A CRN is said to be {\bf weakly reversible} if $sl=l$.

\begin{definition}
	The {\bf deficiency} of a CRN, denoted by $\delta$, is given by $$\delta=n-l-s$$
	where $n$ is the number of complexes, $l$ is the number of linkage classes, and $s$ is the dimension of the stoichiometric subspace $S$.
\end{definition}

\subsection{Fundamentals of Chemical Kinetic Systems}

\begin{definition}
	A {\bf kinetics} $K$ for a reaction network $\left(\mathscr{S},\mathscr{C},\mathscr{R}\right)$ is an assignment to each reaction $r: y \to y' \in \mathscr{R}$ of a rate function ${K_r}:{\Omega _K} \to {\mathbb{R}_{ \ge 0}}$ such that $\mathbb{R}_{ > 0}^m \subseteq {\Omega _K} \subseteq \mathbb{R}_{ \ge 0}^m$, $c \wedge d \in {\Omega _K}$ if $c,d \in {\Omega _K}$, and ${K_r}\left( c \right) \ge 0$ for each $c \in {\Omega _K}$.
	Furthermore, it satisfies the positivity property: supp $y$ $\subset$ supp $c$ if and only if $K_r(c)>0$.
	The system $\left(\mathscr{S},\mathscr{C},\mathscr{R},K\right)$ is called a {\bf chemical kinetic system}.
\end{definition}

\begin{definition}
	The {\bf species formation rate function} (SFRF) of a chemical kinetic system is given by $f\left( x \right) = NK(x)= \displaystyle \sum\limits_{{C_i} \to {C_j} \in \mathscr{R}} {{K_{{C_i} \to {C_j}}}\left( x \right)\left( {{C_j} - {C_i}} \right)}.$
\end{definition}

The ordinary differential equation (ODE) or dynamical system of a chemical kinetic system is $\dfrac{{dx}}{{dt}} = f\left( x \right)$. A zero of $f$ is called a {\bf equilibrium} or {\bf steady state}.

\begin{definition}
	The {\bf set of positive steady states} of a chemical kinetic system $\left(\mathscr{S},\mathscr{C},\mathscr{R},K\right)$ is given by ${E_ + }\left(\mathscr{S},\mathscr{C},\mathscr{R},K\right)= \left\{ {x \in \mathbb{R}^m_{>0}|f\left( x \right) = 0} \right\}.$
\end{definition}

A CRN is said to admit {\bf multiple (positive) steady states} if there exist positive rate constants such that the ODE system admits more than one stoichiometrically compatible steady states.

\begin{definition}
	A kinetics $K$ is a {\bf power-law kinetics} (PLK) if 
	${K_i}\left( x \right) = {k_i}{{x^{{F_{i}}}}} $ for $i =1,...,r$ where ${k_i} \in {\mathbb{R}_{ > 0}}$ and ${F_{ij}} \in {\mathbb{R}}$. The power-law kinetics is identified with an $r \times m$ matrix $F$, called the {\bf kinetic order matrix} and a vector $k \in \mathbb{R}^r$, called the {\bf rate vector}.
\end{definition}
The system becomes the well-known {\bf mass action kinetics (MAK)} if the kinetic order matrix is the transpose of the molecularity matrix.

%\begin{definition}
%A PLK system has {\bf reactant-determined kinetics} (of type PL-RDK) if for any two reactions $i, j$ with identical reactant complexes, the corresponding rows of kinetic orders in $F$ are identical, i.e., ${F_{ik}} = {F_{jk}}$ for $k = 1,2,...,m$. A PLK system has {\bf non-reactant-determined kinetics} (of type PL-NDK) if there exist two reactions with the same reactant complexes whose corresponding rows in $F$ are not identical.
%\end{definition}

\subsection{Deficiency Theorems and Decomposition Theory}
\label{sect:decomposition}

We now state the Deficiency Zero and Deficiency One Theorems primarily from the works of Feinberg \cite{feinberg12,feinberg,feinberg2}.

\begin{theorem} (Deficiency Zero Theorem)
	For any CRN of deficiency zero, the following statements hold:
	\begin{enumerate}
		\item[i.] If the network is not weakly reversible, then for arbitrary kinetics, the differential equations for the corresponding reaction system cannot admit
		a positive steady state.
		\item[ii.] If the network is not weakly reversible, then for arbitrary kinetics, the differential equations for the corresponding reaction system cannot admit
		a cyclic composition trajectory containing a positive composition.
		\item[iii.] If the network is weakly reversible, then { for mass action kinetics} (regardless
		of the positive values the rate constants take), the differential equations have these properties:\\
		There exists within each positive stoichiometric compatibility class precisely one steady state; that steady state is asymptotically stable; and there cannot exist a nontrivial cyclic composition
		trajectory along which all species concentrations are positive.
	\end{enumerate}
\end{theorem}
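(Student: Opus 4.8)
The plan is to reduce the entire theorem to the factorization $f(x) = YI_aK(x)$ of the species formation rate function, where $K(x)\in\mathbb{R}^r_{\ge 0}$ is the vector of rate function values, together with the linear‑algebraic content of deficiency zero: $\ker Y\cap\operatorname{im}I_a=\{0\}$, $\dim\operatorname{im}I_a=n-l$, and $\dim S=s=n-l$. For mass action kinetics I would in addition use $I_aK(x)=A_k\Psi(x)$, where $A_k$ is the rate‑constant–weighted Laplacian of the reaction digraph and $\Psi(x)_i=x^{y_i}$. With this setup, every steady‑state identity (or period‑average identity) forces a strictly positive vector into $\ker I_a$, the circulation space of the digraph, and the graph structure then does the rest.

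For statements i and ii, suppose $x$ is a positive steady state. Then $YI_aK(x)=0$, so $I_aK(x)\in\ker Y\cap\operatorname{im}I_a=\{0\}$ and hence $I_aK(x)=0$. Since $\operatorname{supp}x=\mathscr{S}$, the positivity property of the kinetics gives $K_r(x)>0$ for every reaction $r$, so $K(x)$ is a strictly positive element of $\ker I_a$. But a strictly positive circulation exists if and only if every edge lies on a directed cycle, i.e. every linkage class is strongly connected, i.e. $sl=l$ — that is, the network is weakly reversible, contradicting the hypothesis; this proves i. For ii, a nontrivial cyclic trajectory $x(\cdot)$ of period $T$ with positive compositions satisfies $\int_0^T f(x(t))\,dt = x(T)-x(0)=0$, so with $\bar K:=\int_0^T K(x(t))\,dt$ we get $I_a\bar K\in\ker Y\cap\operatorname{im}I_a=\{0\}$; again $\bar K$ is strictly positive, and the same dichotomy yields a contradiction.

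For statement iii I would proceed in three steps. First, under weak reversibility every positive steady state $x$ is complex balanced: $A_k\Psi(x)\in\ker Y\cap\operatorname{im}A_k\subseteq\ker Y\cap\operatorname{im}I_a=\{0\}$, so $A_k\Psi(x)=0$. Second, I would establish existence of a positive complex‑balanced equilibrium $x^{*}$ for every rate vector: the Matrix–Tree theorem, applied to each (strongly connected) linkage class, produces a strictly positive $b\in\ker A_k$, and the deficiency‑zero hypothesis is exactly what guarantees $\log b\in\operatorname{im}Y^{\top}$, so that $b=\Psi(x^{*})$ for some $x^{*}\in\mathbb{R}^m_{>0}$. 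Third, the set of positive complex‑balanced equilibria is the toric set $\{x>0:\log x-\log x^{*}\in S^{\perp}\}$, and a strict‑convexity (Birch‑type) argument shows it meets each positive stoichiometric compatibility class in exactly one point, giving existence and uniqueness. Asymptotic stability and the absence of nonconstant positive periodic orbits then follow from the pseudo‑Helmholtz function $V(x)=\sum_i\bigl(x_i(\log x_i-\log x_i^{*})-x_i+x_i^{*}\bigr)$: one checks that $\nabla V(x)\cdot f(x)\le 0$ with equality only at steady states, and $V$ is strictly convex and proper along the compatibility class.

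I expect the main obstacle to be the two genuinely analytic points in iii: realizing the positive Laplacian kernel vector as $\Psi(x^{*})$ for some positive composition — the step where the deficiency‑zero hypothesis is actually consumed, via $\log b\in\operatorname{im}Y^{\top}$ — and verifying the Lyapunov inequality $\nabla V(x)\cdot f(x)\le 0$ with its correct equality case, which reduces, after using $A_k\Psi(x^{*})=0$, to a weighted arithmetic–geometric‑mean estimate applied edgewise to the reaction graph. Parts i and ii should be routine once the reduction to $\ker I_a$ is in hand, and I would present them first.
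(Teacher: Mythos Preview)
The paper does not prove this statement: the Deficiency Zero Theorem is quoted in Section~2.3 as a known preliminary result from Feinberg's work \cite{feinberg12,feinberg,feinberg2} and is used later only as a black box (e.g.\ in Examples~\ref{ex:baccam}, \ref{ex:baccam:delay}, and \ref{large:network}). So there is no ``paper's own proof'' to compare against.

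That said, your sketch is the standard Horn--Jackson--Feinberg argument and is essentially correct. The reduction $\delta=\dim(\ker Y\cap\operatorname{im}I_a)$ is right, and the positive-circulation characterization of weak reversibility handles i and ii cleanly. For iii you have identified the two genuinely delicate points yourself. One small caveat on the existence step: the Matrix--Tree kernel vector $b$ is determined only up to a separate positive scalar on each linkage class, so the correct solvability condition is not literally $\log b\in\operatorname{im}Y^{\top}$ but rather $\log b\in\operatorname{im}Y^{\top}+U$, where $U\subset\mathbb{R}^n$ is the subspace of vectors constant on each linkage class; deficiency zero is precisely what makes $\operatorname{im}Y^{\top}+U=\mathbb{R}^n$ (since the annihilator of this sum is $\ker Y\cap\operatorname{im}I_a$). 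You should also be aware that the strict Lyapunov argument, as usually presented, yields local asymptotic stability and rules out positive periodic orbits, but global attraction on the compatibility class (boundary behaviour) is a separate and deeper issue not covered by the classical statement you are proving.
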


\begin{theorem} (Deficiency One Theorem)
	Consider a mass action system. Let $\delta$ be the deficiency of the network
	and let $\delta_\theta$ be the deficiency of the $\theta$th linkage class, each containing just one terminal strong linkage class. Suppose that both of the following
	conditions hold:
	\begin{enumerate}
		\item[i.] $\delta_\theta \le 1$ for each linkage class and
		\item[ii.] the sum of the deficiencies of all the individual linkage classes equals the deficiency of the whole network.
	\end{enumerate}
	Then, no matter what positive values the rate constants
	take, the corresponding differential equations can admit
	no more than one steady state within a positive stoichiometric compatibility class.
	If the network is weakly reversible, the differential equations for the system admit
	precisely one steady state in each positive stoichiometric compatibility class.
\end{theorem}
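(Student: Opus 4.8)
The plan is to recast the steady-state equation in its mass action algebraic form. Writing $\psi(x)\in\mathbb{R}^n_{>0}$ for the vector of monomials $\psi_j(x)=x^{C_j}$ and $A_k$ for the kinetic matrix of the system (the weighted Laplacian of the reaction graph, so that $f(x)=NK(x)=YA_k\psi(x)$), the matrix $A_k$ is block-diagonal over the linkage classes, $A_k=\bigoplus_\theta A_{k,\theta}$. Because the $\theta$th linkage class is assumed to contain a single terminal strong linkage class, $\dim\ker A_{k,\theta}=1$, spanned by a nonnegative vector $\omega_\theta$ supported on that terminal class; hence $\operatorname{rank}A_k=n-l$ and $\operatorname{im}A_k=\operatorname{im}I_a$. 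A positive composition $x$ is then a steady state exactly when $A_k\psi(x)\in\ker Y$, that is, when $A_k\psi(x)\in\ker Y\cap\operatorname{im}I_a$.

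Next I would do the deficiency bookkeeping. From $S=Y(\operatorname{im}I_a)$ one gets $\delta=\dim(\ker Y\cap\operatorname{im}I_a)$, and likewise $\delta_\theta=\dim(\ker Y\cap\operatorname{im}I_{a,\theta})$; since $\operatorname{im}I_a=\bigoplus_\theta\operatorname{im}I_{a,\theta}$, hypothesis (ii) says precisely that $\ker Y\cap\operatorname{im}I_a=\bigoplus_\theta(\ker Y\cap\operatorname{im}I_{a,\theta})$, while hypothesis (i) says each summand is at most a line, say spanned by $b_\theta$. Combined with the block structure of $A_k$, this yields the structural fact I want: if $x$ is a positive steady state then for every $\theta$ one has $A_{k,\theta}\psi_\theta(x)=\lambda_\theta(x)\,b_\theta$ for some scalar $\lambda_\theta(x)$ (with $\lambda_\theta\equiv0$ when $\delta_\theta=0$); in words, at any positive steady state each linkage class is complex balanced up to a single rank-one defect.

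For the uniqueness assertion, let $x^{*}$ and $x^{**}$ be positive steady states with $x^{**}-x^{*}\in S$ and set $\mu=\ln x^{**}-\ln x^{*}$. Substituting the structural fact at both compositions, and using that the $j$th entry of $\psi(x^{**})$ is $e^{\langle\mu,C_j\rangle}$ times that of $\psi(x^{*})$, the complex-balanced linkage classes force $\langle\mu,\,C_j-C_i\rangle=0$ whenever $C_i,C_j$ lie in such a class, while each deficiency-one linkage class contributes one scalar relation tying $\lambda_\theta(x^{*})$, $\lambda_\theta(x^{**})$ and the numbers $\langle\mu,C_j\rangle$ together. The endgame is a strict-convexity/monotonicity argument: one exhibits a function whose directional derivative along the segment from $x^{*}$ to $x^{**}$ is strictly signed unless $\mu=0$ --- the deficiency-zero prototype being Birch's theorem, which says $\{x:\ln x-\ln x^{*}\in S^{\perp}\}$ meets each stoichiometric compatibility class exactly once --- and then checks that the rank-one defects allowed by (i)--(ii), together with the constraint $x^{**}-x^{*}\in S$, cannot cancel that sign; this forces $\mu=0$ and hence $x^{**}=x^{*}$. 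I expect this last step to be the principal obstacle: it is where both hypotheses are genuinely needed, and controlling the coupling between distinct linkage classes through $\ker Y$ is the delicate part.

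Finally, for a weakly reversible network I would upgrade ``at most one'' to ``exactly one'' in every positive stoichiometric compatibility class by proving existence. Here each terminal strong linkage class is its entire linkage class, so $\omega_\theta>0$ and $\ker A_k$ contains strictly positive vectors; the difficulty is that such a vector need not lie in the image of $\psi$, so I would run a topological-degree (or Brouwer fixed-point) argument on a suitable compact convex slice of $(x_0+S)\cap\mathbb{R}^m_{>0}$ to produce at least one positive steady state, and then invoke the uniqueness already established. An alternative is to transfer the existence half of the Deficiency Zero Theorem through an auxiliary weakly reversible deficiency-zero network obtained by resolving the single excess complex relation in each deficiency-one linkage class.
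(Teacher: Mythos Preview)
The paper does not prove this theorem. The Deficiency One Theorem appears in Section~\ref{sect:decomposition} as a preliminary result quoted from Feinberg's work \cite{feinberg12,feinberg,feinberg2}; it is stated without proof and used only as background for the paper's own contributions, which concern the existence and construction of nontrivial independent decompositions. So there is no ``paper's own proof'' to compare against.

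That said, your sketch tracks the genuine architecture of Feinberg's argument: the block-diagonal Laplacian $A_k$, the identification $\delta=\dim(\ker Y\cap\operatorname{im}I_a)$, the decomposition of that kernel over linkage classes under hypothesis~(ii), and the reduction of each deficiency-one class to a rank-one defect under hypothesis~(i) are all correct structural observations. You are also right that the ``endgame'' monotonicity/sign argument is where the real work lies; in Feinberg's original proof this is handled by a careful case analysis on each deficiency-one linkage class (distinguishing whether the unique terminal strong linkage class is absorptive or not), not by a single convexity functional, and the coupling across linkage classes is controlled precisely by the direct-sum condition you identified. Your existence sketch for the weakly reversible case is plausible but vague; Feinberg's actual route goes through an explicit construction rather than a degree argument. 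In short: your outline is a faithful high-level map of the known proof, but the parts you flag as ``principal obstacle'' and ``difficulty'' are indeed substantial and would need to be filled in before this counts as a proof.
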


We consider definitions and earlier results from the decomposition theory of chemical reaction networks. 

\begin{definition}
	A {\textbf{decomposition}} of $\mathscr{N}$ is a set of subnetworks $\{\mathscr{N}_1, \mathscr{N}_2,...,\mathscr{N}_k\}$ of $\mathscr{N}$ induced by a partition $\{\mathscr{R}_1, \mathscr{R}_2,...,\mathscr{R}_k\}$ of its reaction set $\mathscr{R}$. 
\end{definition}

We denote a decomposition by 
$\mathscr{N} = \mathscr{N}_1 \cup \mathscr{N}_2 \cup ... \cup \mathscr{N}_k$
as $\mathscr{N}$ is a union of the subnetworks
%in the sense of
\cite{ghms2019}. It also follows immediately that, for the corresponding stoichiometric subspaces, 
${S} = {S}_1 + {S}_2 + \cdots + {S}_k$.

{
	A network decomposition $\mathscr{N} = \mathscr{N}_1 \cup \mathscr{N}_2 \cup ... \cup \mathscr{N}_k$  is a {\bf refinement} of
	$\mathscr{N} = {\mathscr{N}'}_1 \cup {\mathscr{N}'}_2 \cup ... \cup {\mathscr{N}'}_{k'}$ 
	(and the latter a {\bf coarsening} of the former) if it is induced by a refinement  
	$\{\mathscr{R}_1, \mathscr{R}_2,...,\mathscr{R}_k\}$
	of $\{{\mathscr{R}'}_1 \cup {\mathscr{R}'}_2 \cup ... \cup {\mathscr{R}'}_{k'}\}$.
	
	\begin{example}
		Consider $\mathscr{N} = \mathscr{N}_1 \cup \mathscr{N}_2 \cup \mathscr{N}_3$ where the subnetworks are provided in Table \ref{network:refine:coarse}. Let $\mathscr{N}' = \mathscr{N}_2 \cup \mathscr{N}_3$. Then the decomposition $\mathscr{N} = \mathscr{N}_1 \cup \mathscr{N}_2 \cup \mathscr{N}_3$ of $\mathscr{N}$ is a refinement of $\mathscr{N} = \mathscr{N}_1 \cup \mathscr{N}'$. In addition, $\mathscr{N} = \mathscr{N}_1 \cup \mathscr{N}'$ is a coarsening of $\mathscr{N} = \mathscr{N}_1 \cup \mathscr{N}_2 \cup \mathscr{N}_3$.
		\label{ex:refine:coarse}
	\end{example}
	
}

\begin{table}
	\centering
	\caption{Subnetworks in Example \ref{ex:refine:coarse}}
	\label{network:refine:coarse}       % Give a unique label
	% For LaTeX tables use
	\begin{tabular}{lc}
		%\noalign{\smallskip}\hline\noalign{\smallskip}
		%& $\mathscr{N}$  & $\mathscr{N}_1$ & $\mathscr{N}_2$\\
		\noalign{\smallskip}\hline\noalign{\smallskip}
		$\mathscr{N}_1$ & $0 \mathbin{\lower.3ex\hbox{$\buildrel\textstyle\rightarrow\over
				{\smash{\leftarrow}\vphantom{_{\vbox to.5ex{\vss}}}}$}} A$\\
		$\mathscr{N}_2$ & $A + B \to C$\\
		$\mathscr{N}_3$ & $C \to 0$\\
		\noalign{\smallskip}\hline\noalign{\smallskip}
	\end{tabular}
\end{table}

The following important concept of independent decomposition was introduced by Feinberg in \cite{feinberg12}.

{
	
	\begin{definition}
		A network decomposition is said to be {\textbf{independent}} if its stoichiometric subspace is equal to the direct sum of the subnetwork stoichiometric subspaces.
		On the otherhand, it is said to be {\textbf{incidence independent}} if its incidence map (incidence matrix) is equal to the direct sum of the incidence maps (incidence matrices) of the subnetworks.
	\end{definition}
	
	A network $\mathscr{N}$ has at least one independent decomposition given by $\{\mathscr{N}\}$. We call this decomposition as the {\bf{trivial independent decomposition}} of $\mathscr{N}$. In other words, there is only one subnetwork and it is the network itself. Correspondingly, a nontrivial independent decomposition yields at least two subnetworks.
}

\begin{table}
	\begin{center}
		\caption{Details for computation in Example \ref {ex:computation}}
		\label{computation}       % Give a unique label
		% For LaTeX tables use
		\begin{tabular}{lcccc}
			\noalign{\smallskip}\hline\noalign{\smallskip}
			& stoichiometric matrix  & $s$ or $s_i$ & incidence matrix $I_a$ or $I_{a,i}$& rank of $I_a$ or $I_{a,i}$\\
			\noalign{\smallskip}\hline\noalign{\smallskip}
			$\mathscr{N}$ & $\left[ {\begin{array}{*{20}{c}}
					1&{ - 1}&0&0\\
					0&1&0&0\\
					0&0&{ - 1}&0\\
					0&0&1&0\\
					0&0&1&{ - 1}
			\end{array}} \right]$ & 4 & $\left[ {\begin{array}{*{20}{c}}
					{ - 1}&0&0&1\\
					1&{ - 1}&0&0\\
					0&1&0&0\\
					0&0&{ - 1}&0\\
					0&0&1&0\\
					0&0&0&{ - 1}
			\end{array}} \right]$ & 4\\
			$\mathscr{N}_1$ & $\left[ {\begin{array}{*{20}{c}}
					1&{ - 1}\\
					0&1
			\end{array}} \right]$ & 2 & $\left[ {\begin{array}{*{20}{c}}
					{ - 1}&0\\
					1&{ - 1}\\
					0&1
			\end{array}} \right]$ & 2\\
			$\mathscr{N}_2$ & $\left[ {\begin{array}{*{20}{c}}
					{ - 1}&0\\
					1&0\\
					1&{ - 1}
			\end{array}} \right]$ & 2 & $\left[ {\begin{array}{*{20}{c}}
					{ - 1}&0\\
					1&0\\
					0&{ - 1}\\
					0&1
			\end{array}} \right]$ & 2\\
			\noalign{\smallskip}\hline\noalign{\smallskip}
		\end{tabular}
	\end{center}
\end{table}

{
	\begin{example} 
		Consider a network $\mathscr{N}$ with the following reactions:
		\[\begin{array}{l}
			{R_1}:0 \to {X_1}\\
			{R_2}:{X_1} \to {X_2}\\
			{R_3}:{X_3} \to {X_4} + {X_5}\\
			{R_4}:{X_5} \to 0
		\end{array}\]
		and its decomposition with subnetworks $\mathscr{N}_1=\{R_1,R_2\}$ and $\mathscr{N}_2=\{R_3,R_4\}$. In Table \ref{computation}, we can check that the sum of the dimensions of the stoichiometric subspaces (or ranks of the stoichiometric matrices) of the subnetworks equals that of the whole network, i.e., $4=2+2$. Hence, the decomposition is independent. Analogously, the sum of the ranks of the incidence matrices of the subnetworks equals that of the whole network, i.e., $4=2+2$. Therefore, the decomposition is also incidence independent.
		\label{ex:computation}	
	\end{example}
}

Feinberg established the following relation between an independent decomposition and the set of positive steady states of a kinetic system.

\begin{theorem} (Feinberg Decomposition Theorem \cite{feinberg12})
	\label{feinberg:decom:thm}
	Let $P(\mathscr{R})=\{\mathscr{R}_1, \mathscr{R}_2,...,\mathscr{R}_k\}$ be a partition of a CRN $\mathscr{N}$ and let $K$ be a kinetics on $\mathscr{N}$. If %$\mathscr{N} = \mathscr{N}_1 \cup \mathscr{N}_2 \cup ... \cup \mathscr{N}_k$
	$\mathscr{N} = \bigcup\limits_{i = 1}^k \mathscr{N}_k$
	is the network decomposition of $P(\mathscr{R})$ and ${E_ + }\left(\mathscr{N}_i,{K}_i\right)= \left\{ {x \in \mathbb{R}^\mathscr{S}_{>0}|N_iK_i(x) = 0} \right\}$ then
	%\[{E_ + }\left(\mathscr{N}_1,K_1\right) \cap {E_ + }\left(\mathscr{N}_2,K_2\right) \cap ... \cap {E_ + }\left(\mathscr{N}_k,K_k\right) \subseteq  {E_ + }\left(\mathscr{N},K\right).\]
	\[\bigcap\limits_{i = 1}^k {E_ + }\left(\mathscr{N}_i,K_i\right) \subseteq  {E_ + }\left(\mathscr{N},K\right).\]
	If the network decomposition is independent, then equality holds.
\end{theorem}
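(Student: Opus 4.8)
The plan is to prove the Feinberg Decomposition Theorem in two parts: the inclusion that always holds, and the reverse inclusion under the independence hypothesis. For the first part, I would start from the species formation rate functions. Writing $f = NK$ for the whole system and $f_i = N_i K_i$ for each subsystem, the key observation is that the partition $\{\mathscr{R}_1,\dots,\mathscr{R}_k\}$ of $\mathscr{R}$ lets us split the defining sum: $f(x) = \sum_{C_i \to C_j \in \mathscr{R}} K_{C_i \to C_j}(x)(C_j - C_i) = \sum_{\ell=1}^k \sum_{C_i \to C_j \in \mathscr{R}_\ell} K_{C_i \to C_j}(x)(C_j - C_i) = \sum_{\ell=1}^k f_\ell(x)$, since $K_\ell$ is just the restriction of $K$ to the reactions in $\mathscr{R}_\ell$. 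Hence if $x$ is a positive steady state of every subsystem, so $f_\ell(x) = 0$ for all $\ell$, then $f(x) = \sum_\ell f_\ell(x) = 0$, i.e. $x \in E_+(\mathscr{N},K)$. This gives $\bigcap_{i=1}^k E_+(\mathscr{N}_i,K_i) \subseteq E_+(\mathscr{N},K)$ with no hypothesis on the decomposition.

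For the reverse inclusion, suppose the decomposition is independent, so $S = S_1 \oplus S_2 \oplus \cdots \oplus S_k$, and let $x \in E_+(\mathscr{N},K)$. Then $0 = f(x) = f_1(x) + \cdots + f_k(x)$. The crucial point is that each $f_\ell(x)$ lies in $S_\ell$: it is a linear combination of reaction vectors $C_j - C_i$ with $C_i \to C_j \in \mathscr{R}_\ell$, which by definition span $S_\ell$. So we have a sum of vectors, one from each $S_\ell$, equal to zero. Because the sum $S_1 + \cdots + S_k$ is direct, the only way a sum of components from the respective summands can vanish is for each component to vanish individually; therefore $f_\ell(x) = 0$ for every $\ell$. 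Since $x$ is already positive, this says $x \in E_+(\mathscr{N}_\ell, K_\ell)$ for each $\ell$, hence $x \in \bigcap_{i=1}^k E_+(\mathscr{N}_i,K_i)$. Combining with the first part yields equality.

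The main (and really the only) subtlety is the directness argument in the second part: one must be careful that ``independent'' as defined — $S$ equals the direct sum of the $S_\ell$ — genuinely forces unique representation of $0$, which is exactly the standard fact that $V = V_1 \oplus \cdots \oplus V_k$ iff every element (in particular $0$) has a unique expression as $\sum v_\ell$ with $v_\ell \in V_\ell$. Everything else — the splitting of the SFRF along the reaction partition, and the membership $f_\ell(x) \in S_\ell$ — is immediate from the definitions of $K_\ell$, $N_\ell$, and $S_\ell$. No positivity or kinetics-specific structure beyond $K_\ell$ being the restriction of $K$ is needed; the argument is purely linear-algebraic once the decomposition of $f$ is in place.
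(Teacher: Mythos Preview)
Your argument is correct and is in fact the standard proof of this result. Note, however, that the paper does not supply its own proof of this theorem: it is stated in the preliminaries section as a known result and attributed to Feinberg \cite{feinberg12}. So there is no ``paper's proof'' to compare against. What you have written is precisely the classical argument one finds in Feinberg's work and in the subsequent literature: decompose the species formation rate function along the reaction partition, use that each piece lies in the corresponding stoichiometric subspace, and invoke uniqueness of representation in a direct sum to force each piece to vanish. Nothing is missing.
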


{The theorem given above is a powerful tool in studying properties of equilibria of a CRN in terms of studying the properties of its subnetworks.}

\section{Results and Applications}
\label{sec:results}
{
	In this work, we basically deal with independent decomposition given a chemical reaction network. With this assumption of independence, the Feinberg Decomposition Theorem (FDT) gives an equation in terms of the intersection of the sets of equilibria of the subnetworks and that of the whole network.
	In case we use decompositions which are not independent, then the stated equality does not eventually hold.
	Hence, in the succeeding examples, we are going to find independent decompositions of reaction networks.
}
%\begin{remark}
%Instead of the stoichiometric matrix of the CRN, we will use its transpose in our analysis as it is easier for us to study in that manner.
%\end{remark}

%\subsection{CRNs with two reactions}

%\subsubsection{Basic Observation}
\subsection{Some Basic Observations}
\label{basic:obs}
{
	In this section, we consider basic observations, in the form of propositions, about partitioning the set of reaction vectors into two classes, which is somehow easy to follow. However, this will be generalized in the next section that covers the number of classes greater than two.
	We begin with the following easy proposition.
}
\begin{proposition}
	Suppose a CRN has two reactions. If one reaction vector is a multiple of the other, in particular, the reactions are reversible pairs, then, there is no nontrivial independent decomposition. Otherwise, the CRN has a nontrivial decomposition.
\end{proposition}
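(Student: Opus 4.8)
The plan is to analyze the two possible cases directly, using the definition of independent decomposition together with the fact that a CRN with exactly two reactions admits only one nontrivial decomposition of its reaction set, namely the partition $\{\{R_1\},\{R_2\}\}$. So the question reduces entirely to whether this single candidate decomposition $\mathscr{N} = \mathscr{N}_1 \cup \mathscr{N}_2$, with $\mathscr{R}_1 = \{R_1\}$ and $\mathscr{R}_2 = \{R_2\}$, is independent. Let $v_1$ and $v_2$ denote the reaction vectors of $R_1$ and $R_2$. Then $S_1 = \text{span}\{v_1\}$, $S_2 = \text{span}\{v_2\}$, and $S = \text{span}\{v_1, v_2\}$.

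First I would handle the case where $v_1$ is a scalar multiple of $v_2$ (this covers reversible pairs, where $v_1 = -v_2$). Here $v_1 \neq 0$ and $v_2 \neq 0$ since reaction vectors are differences of distinct complexes, so $S_1 = S_2 = S$ is one-dimensional, giving $\dim S_1 + \dim S_2 = 2 > 1 = \dim S$. Since independence requires $\dim S = \dim S_1 + \dim S_2$ (equivalently $S = S_1 \oplus S_2$), the decomposition fails to be independent. As this is the only nontrivial decomposition available, the CRN has no nontrivial independent decomposition.

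Next I would handle the case where $v_1$ is not a scalar multiple of $v_2$. Then $\{v_1, v_2\}$ is linearly independent, so $S = \text{span}\{v_1,v_2\}$ has dimension $2 = \dim S_1 + \dim S_2$, and moreover $S_1 \cap S_2 = \{0\}$, so $S = S_1 \oplus S_2$. Hence the decomposition $\mathscr{N} = \mathscr{N}_1 \cup \mathscr{N}_2$ is independent, and since it has two subnetworks it is nontrivial. (I note the statement as written says ``the CRN has a nontrivial decomposition'' in this case; I would read this as ``nontrivial independent decomposition,'' which is what the argument actually delivers, and would phrase it that way.)

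There is no real obstacle here — the argument is a short case split resting on the elementary linear-algebra fact that two vectors span a space of dimension equal to the sum of the dimensions of their individual spans precisely when they are linearly independent. The only point requiring a word of care is the observation that reaction vectors are nonzero (so the multiple-of relation genuinely forces a one-dimensional overlap), which follows from the irreflexivity condition $(C_i, C_i) \notin \mathscr{R}$ in the definition of a CRN, guaranteeing $C_j - C_i \neq 0$ for every reaction.
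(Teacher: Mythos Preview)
Your proposal is correct and follows essentially the same approach as the paper: both arguments note that the only nontrivial decomposition of a two-reaction CRN is $\{\{R_1\},\{R_2\}\}$ and then compare $\dim S$ with $\dim S_1+\dim S_2$. Your write-up is in fact more complete, since the paper's proof only spells out the dependent case ($1=s\neq s_1+s_2=2$) and leaves the ``otherwise'' case implicit, whereas you treat both cases and also justify explicitly why reaction vectors are nonzero.
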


{
	\begin{proof}
		Let $\mathscr{N}$ be a CRN. Since $\mathscr{N}$ has two reactions then there are only two decompositions, the trivial, and the one where each subnetwork has only a single reaction. For the latter, the corresponding stoichiometric subspaces yields $1=s\ne s_1+s_2=1+1=2$ since one reaction vector is a multiple of the other, and hence, result to non-independence.
	\end{proof}
}

\begin{example}
	Consider the network with the following reactions 
	\[\begin{array}{l}
		{R_1}:{X_1} \to {X_2}\\
		{R_2}:{X_2} \to {X_1}
	\end{array}.\]
	Since $R_1$ and $R_2$ are reversible pairs, then the independent decomposition is trivial.
\end{example}

\begin{example}
	Consider the following CRN: 
	\[\begin{array}{l}
		{R_1}:{2X_1} \to {X_2}\\
		{R_2}:{X_2} \to {X_3}
	\end{array}.\]
	Since the corresponding reaction vectors are not multiple of each other, then a nontrivial independent decomposition exists and is given by $\{\{R_1\},\{R_2\}\}$.
\end{example}

We now consider more general statements with the following propositions:

\begin{proposition}
	Suppose $v_1 \in P_1$ and $v_2 \in P_2$, where reaction vectors $v_1$ and $v_2$ are multiples of each other, and $P_1$ and $P_2$ are distinct classes. Then, the { decomposition of the network induced by partitioning the reaction vectors} is not independent. In particular, in an independent decomposition, reversible pairs must belong to the same class.
\end{proposition}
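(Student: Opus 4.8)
The plan is to show that if two parallel reaction vectors $v_1, v_2$ are placed into different classes $P_1$ and $P_2$, the resulting subnetwork stoichiometric subspaces overlap in a nonzero vector, and hence their sum cannot be a direct sum. Since $v_1$ and $v_2$ are multiples of each other, write $v_2 = c\, v_1$ for some nonzero scalar $c$. Then $v_1 \in S_1 := \text{span}(\mathscr{R}_1)$ and $v_2 \in S_2 := \text{span}(\mathscr{R}_2)$, but also $v_1 = c^{-1} v_2 \in S_2$. Thus the nonzero vector $v_1$ lies in $S_1 \cap S_2$.

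First I would recall that for a decomposition $\mathscr{N} = \mathscr{N}_1 \cup \cdots \cup \mathscr{N}_k$ induced by a partition of the reaction set, one always has $S = S_1 + S_2 + \cdots + S_k$, and the decomposition is independent precisely when this sum is direct, i.e. $\dim S = \sum_{i=1}^k \dim S_i$. A standard fact from linear algebra is that a sum of subspaces $S_1 + \cdots + S_k$ is direct if and only if, for each $i$, $S_i \cap \big(\sum_{j \ne i} S_j\big) = \{0\}$; in particular, if $S_1 \cap S_2 \ne \{0\}$ (whether or not $k = 2$), the sum is not direct. Having exhibited $0 \ne v_1 \in S_1 \cap S_2$ above, I conclude that the decomposition is not independent. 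For the last sentence, a reversible pair $C_i \to C_j$ and $C_j \to C_i$ has reaction vectors $v$ and $-v$, which are multiples of each other, so by the contrapositive of what was just proved, in any independent decomposition these two reactions must lie in the same class $P_i$.

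There is essentially no hard step here; the only thing to be careful about is the case $k > 2$. One must not argue merely ``$\dim(S_1 + S_2) < \dim S_1 + \dim S_2$'' and stop, since independence concerns the full sum over all $k$ classes; the clean way is to invoke the pairwise-intersection characterization of directness (or, equivalently, to note $\dim\big(\sum_i S_i\big) \le \dim S_1 + \dim S_2 - 1 + \sum_{i \ge 3}\dim S_i < \sum_i \dim S_i$). Either formulation makes the conclusion immediate, so I would state the linear-algebra fact explicitly and then apply it.
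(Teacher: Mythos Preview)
Your proof is correct and follows essentially the same idea as the paper: both arguments observe that placing two proportional reaction vectors in distinct classes forces a strict inequality $\dim\big(\sum_i S_i\big) < \sum_i \dim S_i$ (equivalently, a nonzero vector in $S_1 \cap S_2$), so the sum is not direct. The paper's sketch restricts to the two-class setting (as announced at the start of that section), whereas you additionally handle $k>2$ via the pairwise-intersection criterion; this is a small extra care on your part but not a genuinely different route.
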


{
	For a sketch of proof, consider a partition of the set of reaction vectors $P=P_1 \cup P_2$. We have $\dim {\rm span} (P)<\dim {\rm span} (P_1)+\dim {\rm span} (P_2)$
	since $v_1,v_2 \in P$ while $v_1 \in P_1$ and $v_2 \in P_2$ such that $v_1=kv_2$ for some scalar $k$.
}

{
	\begin{example}
		Suppose we have a CRN with the following reactions:
		\[\begin{array}{l}
			{R_1}:0 \to {X_1}\\
			{R_2}:{X_1} \to {X_2}\\
			{R_3}:{X_2} + {X_3} \to {X_1} + {X_3}\\
			{R_4}:{X_2} \to {X_3}
		\end{array}\]
		We choose a decomposition $\left\{ {\left\{ {{R_1},{R_2}} \right\},\left\{ {{R_3},{R_4}} \right\}} \right\}$. Note that the reaction vectors of $R_2$ and $R_3$ are ${X_2} - {X_1}$ and ${X_1} - {X_2}$, respectively. Since, ${X_2} - {X_1} =  - \left( {{X_1} - {X_2}} \right)$, and the two reaction vectors belong to two different classes, then the decomposition is not independent.
	\end{example}
}

\begin{proposition}
	Suppose $v_1 \in P_1$ and $v_2 \in P_2$, where $\{v_1,v_2\}$ is a linearly independent subset of reaction vectors, and $P_1$ and $P_2$ are distinct classes. If there exists another reaction vector $v_3=a_1 v_1 + a_2 v_2$ such that $a_1$ and $a_2$ are both nonzero, then the decomposition is not independent.
\end{proposition}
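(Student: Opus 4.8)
The plan is to reduce independence to a directness statement and then exhibit a single nonzero vector witnessing failure of directness. Recall that for any decomposition one automatically has $S = S_1 + S_2 + \cdots + S_k$, so the decomposition is independent if and only if this sum is direct, equivalently if and only if $\dim S = \sum_{i=1}^k \dim S_i$, equivalently if and only if $S_i \cap \sum_{j \ne i} S_j = \{0\}$ for every $i$. Hence it suffices to produce one index $i$ and one nonzero vector belonging both to $S_i$ and to $\sum_{j \ne i} S_j$. First I would record that $v_1, v_2, v_3$ are all reaction vectors of $\mathscr{N}$, so each lies in the stoichiometric subspace $S_i$ of whichever block $P_i$ of the partition contains it; and that $v_3 = a_1 v_1 + a_2 v_2 \ne 0$ because $a_1, a_2$ are both nonzero and $\{v_1, v_2\}$ is linearly independent. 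This last point is exactly where both hypotheses enter.

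Next I would split into three cases according to the block containing $v_3$. If $v_3 \in P_1$, then $v_1, v_3 \in S_1$, so $v_2 = a_2^{-1}(v_3 - a_1 v_1) \in S_1$; since also $v_2 \in S_2$ and $v_2 \ne 0$, we get $0 \ne v_2 \in S_1 \cap S_2$, so the sum is not direct. If $v_3 \in P_2$, the argument is symmetric: $v_1 = a_1^{-1}(v_3 - a_2 v_2) \in S_2$, giving $0 \ne v_1 \in S_1 \cap S_2$. Finally, if $v_3 \in P_j$ for some $j \notin \{1,2\}$, then $v_3 \in S_j$ while $v_3 = a_1 v_1 + a_2 v_2 \in S_1 + S_2 \subseteq \sum_{i \ne j} S_i$, so $0 \ne v_3 \in S_j \cap \sum_{i \ne j} S_i$. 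In every case $S_1 + S_2 + \cdots + S_k$ fails to be direct, hence $\dim S < \sum_{i=1}^k \dim S_i$ and the decomposition is not independent.

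Each case is a one-line linear-algebra observation, so I do not anticipate a genuine obstacle; the only thing requiring care is the case bookkeeping, in particular not overlooking the possibility that $v_3$ lies in a block distinct from both $P_1$ and $P_2$ (the cases with only two classes, as in the earlier propositions, do not arise here since $k$ may be larger). It is worth remarking that this statement is the two-dimensional analogue of the preceding proposition: there a nonzero scalar relation $v_1 = k v_2$ between members of distinct classes destroys independence, while here it is a nondegenerate relation $v_3 = a_1 v_1 + a_2 v_2$ with $v_1, v_2$ independent that plays the same role, and the proof mechanism — locating a nonzero vector in an offending intersection — is identical.
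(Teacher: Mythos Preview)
Your proof is correct and follows essentially the same idea as the paper's sketch: both arguments recognise that placing $v_3$ in one class forces a nonzero vector into the span of two different pieces, breaking directness. The paper's sketch restricts to two classes (as announced at the start of that subsection), assumes without loss of generality $v_3\in P_1$, and simply asserts $\dim\operatorname{span}(P)<\dim\operatorname{span}(P_1)+\dim\operatorname{span}(P_2)$; your version is more explicit (it names the offending intersection vector in each case) and more general (it handles the possibility $v_3\in P_j$ with $j\notin\{1,2\}$, which the paper does not treat here).
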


{
	Sketching a proof, we consider a partition of the set of reaction vectors $P=P_1 \cup P_2$ and let $v_1 \in P_1$ and $v_2 \in P_2$. Without loss of generality, assume $v_3=a_1 v_1 + a_2 v_2 \in P_1$ for scalars $a_1$ and $a_2$. Since $v_1,v_2,v_3 \in P$ but $v_1,v_3 \in P_1$ and $v_2 \in P_2$, $\dim {\rm span} (P)<\dim {\rm span} (P_1)+\dim {\rm span} (P_2)$. Non-independence follows directly.
}

{
	\begin{example}
		Consider the following reaction network.
		\[\begin{array}{l}
			{R_1}:0 \to A\\
			{R_2}:A \to B\\
			{R_3}:B \to 0
		\end{array}\]
		Let $P$ be the set of all reaction vectors, and consider a decomposition yielding two partitions $P_1$ and $P_2$ such that the reaction vector for $R_1$: $A \in P_1$ and the reaction vector for $R_3$: $-B \in P_2$. We are left with the reaction vector of $R_2$: $B-A$, a linear combination of the two previous reaction vectors. With this set-up, one cannot find an independent decomposition. We can further verify this in the following argument. If $B-A$ belongs to $P_1$, then $\dim {\rm span} (P_1)=2$. Now, $\dim {\rm span} (P_1)+\dim {\rm span} (P_2)=2+1=3>2=\dim {\rm span} (P)$. We can check analogously when $B-A$ belongs to $P_2$. In any case, the decomposition is not independent.
	\end{example}
}

\begin{remark}
	An implication of the results given above is that in an independent decomposition, the reaction vectors that are multiple of each other must belong to the same class.
\end{remark}

\subsection{The Main Theorem, Method, and Applications}
We begin by relating the decomposition of a network with respect to its set of reactions and the corresponding set of reaction vectors.

\noindent Let $\mathscr{R}=\{R_1,\ldots, R_m\}$ be the set of reactions  in a CRN $(\mathscr{S}, \mathscr{C}, \mathscr{R})$. Recall that for each reaction $R_i \in \mathscr{R}$ of the form $C_{i1} \to C_{i2}$,
the \emph{corresponding reaction vector} of $R_i$, denoted by ${\bf R}_i$, is defined by $${\bf R}_i:=C_{i2}-C_{i1}.$$ A \emph{decomposition} of $\mathscr{R}$ is a collection $\mathscr{P}^{\prime}=\{ P_1^{\prime},\ldots, P_m^{\prime}\}$ of subsets of $\mathscr{R}$ 
such that 
\begin{center} $\bigcup P_i^{\prime} = \mathscr{R}$ and $P_i^{\prime} \cap P_j^{\prime} = \emptyset $ for all $i \neq j$. 
\end{center}
Denote by ${\bf R}$ the set of reaction vectors of $\mathscr{R}$. The decomposition of ${\bf R}$, denoted by $\mathscr{P}$, is a collection $\mathscr{P}=\{ P_1,\ldots, P_m\}$ of subsets of ${\bf R}$ 
such that ${\bf R}_k \in P_i$ if and only if the corresponding $R_k^{\prime} \in P_i^{\prime}$.
{
	Note that in Section \ref{basic:obs}, we consider two classes $P_1$ and $P_2$. In this section, however, we consider the generalized partitioning of the set of reaction vectors into $m$ classes.
}
Let $p$ be the dimension of the span of ${\bf R}$, or equivalently, 
\begin{equation}p= \mbox{dim}\left(\sum_{i=1}^m \mbox{span}(P_i)\right).
\end{equation}
The dimension of a sum of subspaces is always at most the sum of the dimensions of these subspaces \cite[Theorem 1.14]{Roman}, and so we have
\begin{equation}
	\label{independent} p \leq\sum_{i=1}^m\mbox{dim} \left( \mbox{span}(P_i)\right).
\end{equation}
We say that the decomposition $\mathscr{P}$ is \emph{independent} if we have equality in (\ref{independent}), otherwise, we say that $\mathscr{P}$ is \emph{dependent}.
We also say that the decomposition $\mathscr{P}^{\prime}$ of $\mathscr{R}$ is independent if and only if the corresponding decomposition $\mathscr{P}$ is independent.

We wish to determine whether an independent decomposition of $\mathscr{R}$ exists. In essence, we only need to look for an independent decomposition
of ${\bf R}$. Let ${\bf R}=\{{\bf R}_1,\ldots, {\bf R}_m\}$. Observe that the existence of an independent decomposition of ${\bf R}$
is invariant under reindexing of the elements of ${\bf R}$. Hence, without loss of generality, we assume that $\{{\bf R}_1,\ldots, {\bf R}_p\}$ is linearly independent. The following is central in the
statement of our main theorem.

\begin{definition}
	\label{def:coordinate:graph}
	Let ${\bf R}=\{{\bf R}_1,\ldots, {\bf R}_m\}$ be a set of vectors such that the span of ${\bf R}$ is of dimension $p$, and suppose that $\{{\bf R}_1,\ldots,{\bf R}_p\}$ is linearly independent.
	The coordinate graph of ${\bf R}$ is the (undirected) graph $G=(V,E)$ with vertex set $V=\{v_1,\ldots, v_p\}$ and edge set $E$ such that
	$(v_i,v_j)$ is an edge in $E$ if and only if there exists $k >p$ with ${\bf R}_k=\displaystyle \sum_{j=1}^p a_j {\bf R}_j$ and both $a_i$ and $a_j$ are nonzero.
\end{definition}

We recall some basic definitions in graph theory.
\begin{definition} Let $G$ be a graph with vertex set $V$ and edge set $E$.
	\begin{enumerate}
		\item A subgraph $G_0=(V_0,E_0)$ is a graph with vertex set $V_0$ and edge set $E_0$ such that
		$V_0 \subseteq V$ and $E_0 \subseteq E$.
		\item A path in $G$ is a finite sequence of edges that joins two vertices. 
		\item $G$ is connected if every two vertices in $V$ is joined by a path in $G$.
		\item A connected component of $G$ is a maximal connected subgraph of $G$. We have that $G$ is connected if and only if it only has
		one connected component. Also, if $G_1$ and $G_2$ are distinct components of $G$, then whenever $v_i $ is a vertex in $G_i$ for $i=1,2$,
		then there is no path connecting $v_1$ and $v_2$.
	\end{enumerate}
\end{definition}

{
	\begin{example}
		\label{coordinate:example}
		Take the CRN with corresponding reaction vectors in Table \ref{network:coordinate:example}.
		\begin{table}
			\centering
			\caption{CRN in Example \ref{coordinate:example}}
			\label{network:coordinate:example}       % Give a unique label
			% For LaTeX tables use
			\begin{tabular}{lr}
				%\noalign{\smallskip}\hline\noalign{\smallskip}
				reaction  & reaction vector \\
				\noalign{\smallskip}\hline\noalign{\smallskip}
				$0 \to X_1$ & $X_1$\\
				$X_1 \to X_2$ & $X_2-X_1$\\
				$X_2 \to 0$ & $-X_2$\\
				$X_3+X_2 \to X_4+X_2$ & $X_4-X_3$\\
				$X_4 \to X_3$ & $X_3-X_4$\\
				\noalign{\smallskip}\hline\noalign{\smallskip}
			\end{tabular}
		\end{table}
		Consider this linearly independent set: $V:=\{X_1,-X_2,X_3-X_4\}$. Let $v_1=X_1$, $v_2=-X_2$, and $v_3=X_3-X_4$. The coordinate graph is given by $G=(V,E)$ where $V=\{v_1,v_2,v_3\}$. In Definition \ref{def:coordinate:graph}, $e=(v_1,v_2)$ is an edge since there is a reaction vector $X_2-X_1$ which is a linear combination of $v_1$ and $v_2$. On the other hand, we cannot get an edge with $v_3$ as vertex since we cannot express $v_3$ in terms of linear combination of at least two vectors or vertices in $V$. Therefore, the coordinate graph is given in Fig \ref{coord:ex}. Note that the graph is not connected and there are two distinct components.
		\begin{figure}
			\begin{center}
				\includegraphics[width=15cm,height=5cm,keepaspectratio]{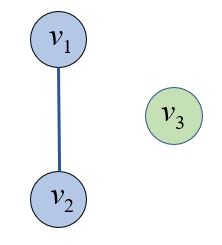}
				\caption{Coordinate graph in Example \ref{coordinate:example}}
				\label{coord:ex}
			\end{center}
		\end{figure}
	\end{example}
}

The following theorem gives a necessary and sufficient condition for the existence of an independent decomposition of a set of vectors.
\begin{theorem} Let  ${\bf R}$ be a finite set of vectors. An independent decomposition of ${\bf R}$ exists if and only if 
	the coordinate graph of ${\bf R}$ is not connected. 
\end{theorem}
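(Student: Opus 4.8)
The plan is to characterize independent decompositions of $\mathbf{R}$ in terms of how the ``dependent'' vectors $\mathbf{R}_k$ ($k>p$) tie together the coordinates of the fixed basis $\{\mathbf{R}_1,\ldots,\mathbf{R}_p\}$, and then translate this directly into connectivity of the coordinate graph $G$. First I would set up the basic dimension count: for any decomposition $\mathscr{P}=\{P_1,\ldots,P_m\}$, independence means equality in $(\ref{independent})$, i.e. $p=\sum_i \dim(\mathrm{span}(P_i))$. Since $\{\mathbf{R}_1,\ldots,\mathbf{R}_p\}$ is linearly independent and spans the same space as $\mathbf{R}$, I would argue that in an independent decomposition each class $P_i$ must be spanned by a sub-collection of these $p$ basis vectors in the following sense: the classes partition a spanning set, the dimensions add up to $p$, and $\mathrm{span}(\mathbf{R})$ has dimension $p$, so $\mathrm{span}(\mathbf{R})=\bigoplus_i \mathrm{span}(P_i)$ is an internal direct sum decomposition into subspaces $W_i:=\mathrm{span}(P_i)$. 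The key structural consequence I want is that each basis vector $\mathbf{R}_j$ ($1\le j\le p$) lies in exactly one $W_i$, and more importantly each dependent vector $\mathbf{R}_k=\sum_{j=1}^p a_j\mathbf{R}_j$ forces all the basis vectors $\mathbf{R}_j$ with $a_j\neq 0$ to lie in the \emph{same} $W_i$ — because $\mathbf{R}_k\in W_i$ for some $i$, and expressing $\mathbf{R}_k$ in the direct-sum decomposition, uniqueness of coordinates forces $a_j\mathbf{R}_j\in W_i$ whenever $a_j\neq 0$. This is exactly the statement that the support of $\mathbf{R}_k$ (the set of indices $j$ with $a_j\neq 0$) must lie within a single connected component of $G$ — which is how the edges of $G$ were defined.

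For the forward direction (independent decomposition exists $\Rightarrow$ $G$ disconnected), I would take an independent decomposition with at least two nonempty classes, produce the direct-sum decomposition $\mathrm{span}(\mathbf{R})=\bigoplus W_i$ as above, and note that each $W_i$ is spanned by a proper nonempty subset $B_i\subseteq\{\mathbf{R}_1,\ldots,\mathbf{R}_p\}$, with the $B_i$ forming a partition of the basis. If $G$ were connected, any two basis vertices $v_a,v_b$ would be joined by a path, and each edge on that path arises from some dependent $\mathbf{R}_k$ whose support is contained in one $W_i$; walking along the path shows $v_a$ and $v_b$ lie in the same block, contradicting the existence of $\ge 2$ nontrivial blocks. (I would be a little careful here: a decomposition with two classes where one class is empty is not allowed, and I would check whether the convention in the paper permits a class with an empty span; if $P_i=\emptyset$ then $\mathrm{span}(P_i)=\{0\}$ contributes $0$, so a ``nontrivial'' independent decomposition really needs at least two classes each with nonzero span — this matches the coordinate graph having $\ge 2$ components.)

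For the converse (G disconnected $\Rightarrow$ independent decomposition exists), suppose $G$ has connected components with vertex sets $V_1,\ldots,V_t$, $t\ge 2$. I would define $W_i:=\mathrm{span}\{\mathbf{R}_j : v_j\in V_i\}$, so that $\mathrm{span}(\mathbf{R})=\bigoplus_{i=1}^t W_i$. The crucial claim is that every vector $\mathbf{R}_k\in\mathbf{R}$ (both basis and dependent) lies in exactly one $W_i$: for a basis vector this is clear; for a dependent $\mathbf{R}_k=\sum a_j\mathbf{R}_j$, its support $\{j : a_j\neq 0\}$ induces a clique in $G$ (by Definition~\ref{def:coordinate:graph}, all pairs of support-indices are edges), hence lies in a single component $V_i$, so $\mathbf{R}_k\in W_i$. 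I would then define the decomposition $P_i:=\{\mathbf{R}_k\in\mathbf{R} : \mathbf{R}_k\in W_i\}$ — this is a genuine partition of $\mathbf{R}$ since each $\mathbf{R}_k$ belongs to exactly one $W_i$ — and since $\mathrm{span}(P_i)\subseteq W_i$ with $\sum_i\dim W_i=p=\dim\mathrm{span}(\mathbf{R})$, the dimension inequality $(\ref{independent})$ must be an equality, so the decomposition is independent; and it is nontrivial because $t\ge 2$ and each $P_i$ contains at least the basis vectors indexing $V_i$, hence is nonempty.

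The main obstacle, I expect, is bookkeeping around the definitions rather than any deep difficulty: making precise the equivalence ``$\mathbf{R}_k$ has support contained in one component of $G$'' $\Leftrightarrow$ ``$\mathbf{R}_k$ lies in one of the $W_i$,'' and in particular verifying that a dependent vector's support forms a clique in $G$ (so that it cannot straddle two components) — this follows from Definition~\ref{def:coordinate:graph} but must be invoked carefully, since the definition puts an edge between \emph{every} pair in the support of \emph{each} such $\mathbf{R}_k$. A secondary subtlety is the degenerate case where some $\mathbf{R}_k$ has support of size $\le 1$ (i.e. $\mathbf{R}_k$ is a scalar multiple of a single basis vector): such a vector contributes no edges, and I should check it still gets correctly assigned to the component of that single basis vector, which it does. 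With these points nailed down, the direct-sum dimension count does all the real work.
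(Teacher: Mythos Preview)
Your proposal is correct and follows essentially the same strategy as the paper's proof: for the ``disconnected $\Rightarrow$ independent decomposition'' direction you build the partition from the connected components exactly as the paper does (your observation that the support of each dependent $\mathbf{R}_k$ is a clique in $G$ is precisely the paper's ``either $a_1=\cdots=a_s=0$ or $a_{s+1}=\cdots=a_p=0$'' step). For the other direction you argue via the internal direct sum $\mathrm{span}(\mathbf{R})=\bigoplus_i W_i$ that every edge of $G$ stays within a single basis-block $B_i$, whereas the paper instead picks one ``crossing'' edge $(v_i,v_j)$ between the two blocks and derives the dimension contradiction $\dim\mathrm{span}(P_1)+\dim\mathrm{span}(P_2)\ge p+1$ directly; these are two phrasings of the same observation, and your direct-sum formulation is arguably cleaner and avoids the paper's somewhat ad~hoc ``$s$ maximal'' clause.
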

\begin{proof}
	Let ${\bf R}=\{{\bf R}_1,\ldots, {\bf R}_m\}$ be a set of vectors and let $p$ be the dimension of the span of ${\bf R}$, and suppose that $\{{\bf R}_1,\ldots,{\bf R}_p\}$ is linearly independent. 
	Let  $G=(V,E)$ have vertex set $V$ and edge set $E$.
	
	Suppose that the coordinate graph of $G$ is not connected, that is, $G$ has at least two connected components. Let
	$V=\{v_1,\ldots, v_m\}$. We do the case when $G$ has only two connected components
	say $G_1=(V_1,E_1)$ and $G_2=(V_2,E_2)$. We also reindex the vectors in $R$, if necessary, so that $V_1=\{v_1,\ldots, v_s\}$ and $V_2=\{v_{s+1},\ldots, v_p\}$.
	We now form the independent decomposition of ${\bf R}$. We start by setting $P_1$ to contain ${\bf R}_{1},\ldots, {\bf R}_s$ and by setting $P_2$ to contain ${\bf R}_{s+1},\ldots, {\bf R}_p$.
	
	For $k>p$, we  now set a rule to put ${\bf R}_{k} =\displaystyle \sum_{i=1}^p a_{i} {\bf R}_i$ in either $P_1$ and $P_2$. Observe that either 
	\begin{center} $a_{1}=\cdots=a_s=0$ or $a_{s+1}=\cdots=a_p=0$,
	\end{center}
	since if $a_i \neq 0$ for some $i \leq s$
	and $a_j \neq 0 $ for some $j>s$, then $(v_i,v_j) \in E$, and this contradicts the assumption that $G_1$ and $G_2$ are distinct components of $G$.
	If it is the former, we set $P_2$ to contain ${\bf R}_{k}$, and otherwise, we set $P_1$ to contain ${\bf R}_{k}$. This gives us that any vector added to the original 
	set $\{{\bf R}_1,\ldots, {\bf R}_s\}$ in $P_1$ are linear combination of $\{{\bf R}_1,\ldots, {\bf R}_s\}$, and so the dimension of the span of $P_1$ is $s$. Similarly,
	the dimension of the span of the final $P_2$ is $p-s$.
	This tells us that 
	$\mathscr{P}=\{P_1,P_2\}$ is an independent decomposition of ${\bf R}$.
	
	Suppose that the coordinate graph of $G$ is connected. Suppose that $\mathscr{P}=\{P_1,\ldots, P_k\}$ is an independent decomposition of ${\bf R}$.
	We do the case when $k=2$ and the general case can be proven similarly. Suppose that $P_1$ has $s$ elements from $\{{\bf R}_1,\ldots, {\bf R}_p\}$ and that $s$ is maximal
	in the sense that we can no longer add elements from $\{{\bf R}_1,\ldots, {\bf R}_p\} \setminus P_1$ to $P_1$ 
	that makes $\{P_1,P_2\}$ an independent decomposition of ${\bf R}$.
	We can reindex the elements of $\{{\bf R}_1, \ldots, {\bf R}_p\}$ so that
	${\bf R}_1,\ldots, {\bf R}_s$ belong to $P_1$ and ${\bf R}_{s+1},\ldots, {\bf R}_p$ belong to $P_2$. Now, since $G$ is connected, there is an $i \leq s$ and there is a
	$j>s$ such that $(v_i,v_j)$ is an edge of $G$. This implies that there is an ${\bf R}_{t}$ for some $t>p$ such that
	${\bf R}_t= \displaystyle \sum_{l=1}^p a_l {\bf R}_l$, where $a_i$ and $a_j$ are nonzero. If ${\bf R}_t$ belongs to $P_1$, then
	${\bf R}_1,\ldots, {\bf R}_s,{\bf R}_t$ is linearly independent, and so, since $P_1$ contains ${\bf R}_1,\ldots, {\bf R}_s,{\bf R}_t$ and $P_2$ contains
	${\bf R}_{s+1},\ldots, {\bf R}_p$, we have
	$$\mbox{dim} \left( \mbox{span}(P_1)\right) + \mbox{dim} \left( \mbox{span}(P_2)\right) \geq (s+1)+(p-s)=p+1.$$
	We arrive at the same conclusion when ${\bf R}_t$ belongs to $P_2$. This implies that we cannot have an independent decomposition of ${\bf R}$, as desired.
\end{proof}

Let ${\bf R}=\{{\bf R}_1,\ldots, {\bf R}_m\}$ be a set of vectors such that the span of ${\bf R}$ is of dimension $p$, and suppose that ${\bf S}=\{{\bf R}_1,\ldots,{\bf R}_p\}$ is linearly independent.
Suppose that the coordinate graph of $G$ is not connected, and for the sake of illustration, assume that $G$ has only two components $G_1$ and $G_2$.
We take the elements of $S$ corresponding to the vertices of $G_1$ with their linear combinations in ${\bf R}$ and form $P_1$, and we take the elements of ${\bf S}$ 
corresponding to the vertices of $G_2$ with their linear combinations in ${\bf R}$ and form $P_2$. The proof above ensures that $\mathscr{P}=\{P_1,P_2\}$ is
a partition of ${\bf R}$.

We now consider the following detailed method to obtain an independent decomposition for a CRN, if it exists.

\begin{remark}
	For ease of notation, we also use ${R}_i$ for reaction vector ${\bf R}_i$. After all, reaction vector ${\bf R}_i$ is identified with reaction ${R}_i$. We emphasize that reaction vectors may not be unique, as two reactions may have the same reaction vectors.
\end{remark}

\noindent {\bf Method of Finding Independent Decompositions of CRNs}

Given a CRN, we consider a method of getting an independent decomposition, if it exists, using the following steps:

\begin{itemize}
	\item[1.] Get the transpose of the stoichiometric matrix $N$, which we denote by $N^T$.
	\item[2.] Find a maximal linearly independent set of vectors say $\{R_{i_1},R_{i_2}\ldots, R_{i_p}\}$, which forms a basis for the row space of $N^T$.
	\item[3.] Construct the vertex set of the coordinate graph $G=(V,E)$ of $R$ by representing each $R_{i_j}$ as vertex $v_i$.
	\item[4.] For each vector $R_k$ distinct from 
	the elements of $\{R_{i_1},R_{i_2}\ldots, R_{i_p}\}$,
	write $R_k= \displaystyle \sum_j a_{k,j} R_{i_j}$.
	For each pair $a_{k,j_1}$ and $a_{k,j_2}$
	in the preceding sum,
	with both coefficients being nonzero, we add
	the edge $(v_{j_1},v_{j_2})$ to $E$.
	\item[5.] If the formed coordinate graph $G$
	is connected, then there is no nontrivial
	decomposition for $R$. Otherwise, the
	reaction vectors corresponding to vertices
	belonging in the same connected component, together with the reaction vectors in their span
	constitute a partition of $R$ in the
	independent decomposition of
	$R$.
\end{itemize}

{
	We consider the following examples to illustrate our method of finding independent decompositions of reaction networks, if they exist.
}

\begin{example}
	We consider the generalized mass action model of anaerobic fermentation pathway of \it{Saccharomyces cerevisiae} \cite{curto,voit}.
	The CRN is provided in \cite{AJLM2017}.
	\[ \begin{array}{lll}
		R_1: X_2 \to X_1 + X_2 & \ \ \ &  R_8: X_3 + X_5  \to X_4 + X_5\\
		R_2: X_1 + X_5  \to X_2 + X_5  &  \ \ \ & R_9: X_3 + X_5 \to X_3 + 2X_5 \\
		R_3: 2X_5 + X_1  \to X_5 + X_1 &  \ \ \ & R_{10}: X_3 + X_4 + X_5 \to X_4 + X_5\\
		R_4: X_2 + X_5  \to X_3 + X_5  &  \ \ \ & R_{11}: X_3 + X_4 + X_5 \to X_3 + X_5  \\
		R_5: 2X_5 + X_2  \to X_5 + X_2 & \ \ \  & R_{12}: X_3 + X_4 + X_5  \to X_3 + X_4 + 2X_5\\
		R_6: X_2 + X_5  \to X_5  &  \ \ \ & R_{13}: 2X_5 \to X_5 \\
		R_7: X_2 + X_5  \to X_2  &  \\
	\end{array}\]
	The following matrix can be seen as the transpose of the stoichiometric matrix.
	\begin{equation}
		\label{GMA}
		\begin{bmatrix} 1 & 0 & 0 & 0 & 0 \\ -1 & 1 & 0 & 0 & 0 \\ 0 & 0 & 0 & 0 & -1 \\ 0 & -1 & 1 & 0 & 0 \\ 0 & 0 & 0 & 0 & -1 \\ 0 & -1 & 0 & 0 & 0 \\ 
			0 & 0 & 0 & 0  & -1 \\ 0 & 0 & -1 & 1 & 0 \\0 & 0 & 0 & 0 & 1 \\ 0 & 0 & -1 & 0 & 0 \\ 0 & 0 & 0 & -1 & 0 \\ 0 & 0 & 0 & 0 & 1 \\ 0 & 0 & 0 & 0 & -1 
		\end{bmatrix}
	\end{equation}
	
	Let $R_i$ be the $i$th row of (\ref{GMA}). Observe that the rows $R_1, R_2, R_3, R_4, R_8$ form a basis for the row space of (\ref{GMA}). We also have the following relations for the rows
	of (\ref{GMA}).
	\begin{enumerate}
		\item $R_5= R_3$
		\item $R_6=-R_2-R_1$
		\item $R_7=R_3$
		\item $R_9=-R_3$
		\item $R_{10}=-R_4-R_2-R_1$
		\item $R_{11}=-R_8-R_4-R_2-R_1$
		\item $R_{12}=-R_3$
		\item $R_{13}=R_3$
	\end{enumerate}
	
	We now represent $R_1,R_2,R_3,R_4,R_8$ respectively by
	$v_1,v_2,v_3,v_4,v_5$. The linear combination above, then contributes the following edges to the coordinate graph $G$ of $R$ For an illustration, please refer to Figure \ref{sacc:fig}.
	
	\begin{enumerate}
		\item $R_6$: $(v_1,v_2)$
		\item $R_{10}$: $(v_1,v_2)$, $(v_1,v_4)$, $(v_1,v_4)$
		\item $R_{11}$: $(v_i,v_j)$ where $i,j \in \{ 1,2,4,5\}$
	\end{enumerate}
	
	\begin{figure}
		\begin{center}
			\includegraphics[width=15cm,height=5cm,keepaspectratio]{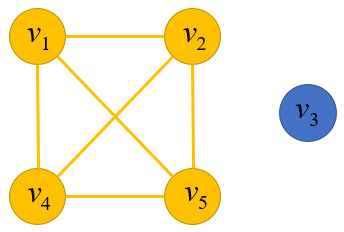}
			\caption{Coordinate graph for the yeast pathway CRN}
			\label{sacc:fig}
		\end{center}
	\end{figure}
	
	This gives us two maximal components for $G$ which induces the following independent decomposition of $R$.
	\begin{enumerate}
		\item $P_1=\{R_1,R_2,R_4,R_6,R_8,R_{10},R_{11}\}$
		\item $P_2=\{R_3,R_5,R_7,R_9,R_{12},R_{13}\}$
	\end{enumerate}
	Thus, the CRN of the given model has a nontrivial independent decomposition $\{P_1,P_2\}$.
\end{example}

\begin{example}
	Consider a particular biological system, which is a metabolic network with one positive feedforward and a negative feedback obtained from the published work of Sorribas et al. \cite{SHVA2007}. The following CRN corresponds to the metabolic network with $X_5$ as independent variable.
	
	\[ \begin{array}{lll}
		R_1: 0 \to X_1 & \ \ \ &  R_4: X_1+X_2  \to X_1+X_4\\
		R_2: X_1+X_3  \to  X_3+X_2  &  \ \ \ & R_5: X_3 \to 0 \\
		R_3: X_2  \to X_3 &  \ \ \ & R_6: X_4 \to 0\\
	\end{array}\]
	
	\begin{equation}
		\left[ {\begin{array}{*{20}{c}}
				1&0&0&0\\
				{ - 1}&1&0&0\\
				0&{ - 1}&1&0\\
				0&{ - 1}&0&1\\
				0&0&{ - 1}&0\\
				0&0&0&{ - 1}
		\end{array}} \right]
		\label{sorr}
	\end{equation}
	
	Let $R_i$ be the $i$th row of (\ref{sorr}). Observe that the rows $R_1, R_4, R_5, R_6$ form a basis for the row space of the matrix above. We also have the following relations for the rows
	of (\ref{sorr}).
	\begin{enumerate}
		\item $R_2= -R_1-R_4-R_6$
		\item $R_3=-R_4-R_5+R_6$
	\end{enumerate}
	
	We now represent $R_1, R_4, R_5, R_6$ respectively by
	$v_1,v_2,v_3,v_4$. The linear combination above, then contributes the following edges to the coordinate graph $G$ of $R$.
	
	\begin{enumerate}
		\item $R_2$: $(v_1,v_2),(v_1,v_4)$
		\item $R_{3}$: $(v_2,v_3)$, $(v_2,v_4)$
	\end{enumerate}
	
	Thus, there must only be one component for G. Therefore, the decomposition is trivial.

\end{example}

\subsection{Further Applications to Influenza Virus Models}

We now apply our results to influenza virus models integrated with deficiency theorems and the Feinberg Deficiency Theorem (FDT).

\begin{example}
	We consider the Baccam Model with three variables: uninfected (susceptible) target cells (T), infected
	cells (I) and infectious-viral titer (V) \cite{baccam}. {A diagram is given in Fig. \ref{baccam:fig}. Here, uninfected target cells become infected, and infected cells die spontaneously at rate of $\beta$ and $\delta$, respectively.
		In addition, virus proliferates and dies at the rate of $p$ and $c$, respectively.} With respect to the network, the following are the reactions \cite{virus}.
	\begin{align*}
		{R_1}&:{T}+V \to {I} + {V}\\
		{R_2}&:{I} \to {0} \\
		{R_3}&: I \to {I} + {V}\\
		{R_4}&:{V} \to {0} 
	\end{align*}
	
	\begin{figure}
		\begin{center}
			\includegraphics[width=15cm,height=5cm,keepaspectratio]{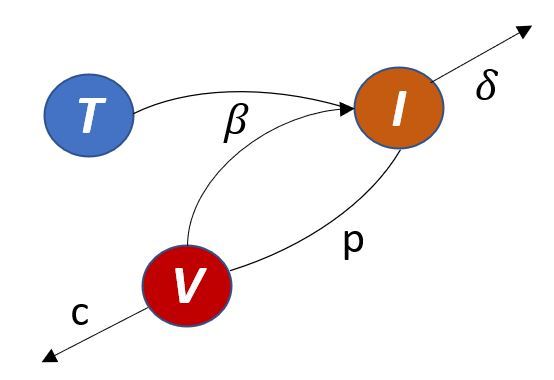}
			\caption{Diagram for Baccam Model adapted from \cite{virus}}
			\label{baccam:fig}
		\end{center}
	\end{figure}
	
	We can obtain the following independent decomposition of $$\mathscr{N}=\{R_1,R_2,R_3,R_4\}:$$ $\{\mathscr{N}_1,\mathscr{N}_2\}$ where $\mathscr{N}_1=\{R_1,R_2\}$ and $\mathscr{N}_2=\{R_3,R_4\}$.
	\begin{table}
		\centering
		\caption{Network numbers for Baccam CRN}
		\label{network:baccam}       % Give a unique label
		% For LaTeX tables use
		\begin{tabular}{lrrr}
			\noalign{\smallskip}\hline\noalign{\smallskip}
			& $\mathscr{N}$  & $\mathscr{N}_1$ & $\mathscr{N}_2$\\
			\noalign{\smallskip}\hline\noalign{\smallskip}
			\# species & 3 & 3 & 2\\
			\# complexes & 5 & 4 & 4\\
			\# reactions & 4 & 2 & 2\\
			\# irreversible reactions & 4 & 2 & 2\\
			\# linkage classes & 1 & 2 &2\\
			rank of network & 3 & 2 & 1\\
			deficiency & 1 & 0 &1\\
			\noalign{\smallskip}\hline\noalign{\smallskip}
		\end{tabular}
	\end{table}
	We consider the first subnetwork. Note that its deficiency is zero. From the Deficiency Zero Theorem (DZT), as the subnetwork is not weakly reversible, it has no capacity for multistationarity.
	Since the decomposition is independent, from the FDT (i.e., Theorem \ref{feinberg:decom:thm}), the intersection of the set of positive steady states of the systems equals the set of positive steady states of the whole network.
	Therefore, the whole network (of deficiency one) does not also have the capacity for multistationarity, as the following statement in DZT is satisfied:
	If the network is not weakly reversible, then for arbitrary kinetics, the differential equations for the corresponding reaction system cannot admit
	a positive steady state.
	In this particular example, our result is strong in the sense that it considers arbitrary kinetics.
	\label{ex:baccam}
\end{example}

\begin{example}
	We now consider the Baccam Model with delayed virus production \cite{baccam,miao}. The following are the reactions \cite{virus}.
	\begin{align*}
		{R_1}&:{T}+V \to {I_1} + {V}\\
		{R_2}&:{I_1} \to {I_2} \\
		{R_3}&:{I_2} \to {0} \\
		{R_4}&: I_2 \to {I_2} + {V}\\
		{R_5}&:{V} \to {0} 
	\end{align*}
	We can obtain the following independent decomposition of $$\mathscr{N}=\{R_1,R_2,R_3,R_4,R_5\}:$$ $\{\mathscr{N}_1,\mathscr{N}_2\}$ where $\mathscr{N}_1=\{R_1,R_2,R_3\}$ and $\mathscr{N}_2=\{R_4,R_5\}$.
	\begin{table}
		\begin{center}
			\caption{Network numbers for Baccam CRN (with delayed virus production)}
			\label{network:baccamdelay}       % Give a unique label
			% For LaTeX tables use
			\begin{tabular}{lrrr}
				\noalign{\smallskip}\hline\noalign{\smallskip}
				& $\mathscr{N}$  & $\mathscr{N}_1$ & $\mathscr{N}_2$\\
				\noalign{\smallskip}\hline\noalign{\smallskip}
				\# species & 4 & 4 & 2\\
				\# complexes & 7 & 5 & 4\\
				\# reactions & 5 & 3 & 2\\
				\# irreversible reactions & 5 & 3 & 2\\
				\# linkage classes & 2 & 2 &2\\
				rank of network & 4 & 3 & 1\\
				deficiency & 1 & 0 &1\\
				\noalign{\smallskip}\hline\noalign{\smallskip}
			\end{tabular}
		\end{center}
	\end{table}
	We consider the first subnetwork. Note that its deficiency is zero. From the Deficiency Zero Theorem, as the subnetwork is not weakly reversible, it has no capacity for multistationarity. It follows from the FDT that the whole network does not also have the capacity for multistationarity.
	\label{ex:baccam:delay}
\end{example}

\begin{remark}
	If an independent decomposition of a network has a non-weakly reversible subnetwork with deficiency zero, then by the DZT and the FDT, for arbitrary kinetics, the whole system cannot admit a positive steady state. This happens for the case of Examples \ref{ex:baccam} and \ref{ex:baccam:delay}.
\end{remark}

\begin{example}
	We now consider the Handel model with the following variables: uninfected cells (U), latently infected cells
	(E), productively infected cells (I), dead cells (D), free viruses (V), innate immune response (F), and
	adaptive immune response (X) \cite{handel}. The following are the reactions with the given kinetics \cite{virus}.
	\begin{align*}
		{R_1}&:D \to U & \lambda D\\
		{R_2}&:U+V \to E+V & bUV\\
		{R_3}&:E \to I & gE\\
		{R_4}&:I \to D & dI\\
		{R_5}&:I \to I+V & \dfrac{pI}{1+\kappa F}\\
		{R_6}&:V \to 0 & cV\\
		{R_7}&:U+V \to U & \gamma bUV\\
		{R_8}&:V+X \to X & kVX\\
		{R_9}&:V \to V+F & wV\\
		{R_{10}}&:F \to 0 & \delta F\\
		{R_{11}}&:V \to V+X & fV\\
		{R_{12}}&:X \to 2X & rX
	\end{align*}
	In addition, the following is the set of corresponding ODEs \cite{handel}.
	\begin{align*}
		{\text{uninfected cells \ \ }}U'&= \lambda D - bUV\\
		{\text{latently infected cells \ \ }}E' &= bUV - gE\\
		{\text{productively infected cells \ \ }}I' &= gE - dI\\
		{\text{dead cells \ \ }}D' &= dI - \lambda D\\
		{\text{free virus \ \ }}V' &= \dfrac{{pI}}{{1 + \kappa F}} - cV - \gamma bUV - kVX\\
		{\text{innate immune response \ \ }}F' &= wV - \delta F\\
		{\text{adaptive immune response \ \ }}X' &= fV + rX
	\end{align*}
	
	We use the step-by-tep procedure for getting an independent decomposition, if it exists.
	\begin{equation}
		\label{handel}
		\begin{bmatrix} -1 & 1 & 0 & 0 & 0 & 0 & 0 \\  0 & -1 & 0 & 0 & 0 & 0 & 1 \\  0 &0 & 0 & 0 & 0 & 1 & -1 \\   1 & 0 & 0 & 0 & 0 &-1 & 0 \\
			0 & 0 & 1 & 0 & 0 & 0 & 0 \\ 0 & 0 & -1 & 0 & 0 & 0 & 0 \\ 0 & 0 & -1 & 0 & 0 & 0 & 0   \\ 0 & 0 & -1 & 0 & 0 & 0 & 0 \\
			0 &0 & 0 & 1 & 0 & 0 & 0 \\ 0 &0 & 0 & -1 & 0 & 0 & 0 \\ 0 &0 &0 & 0 & 1 & 0 & 0  \\  0 &0 &0 & 0 & -1 & 0 & 0 \end{bmatrix}
	\end{equation}
	
	Let $R_i$ be the $i$th row of (\ref{handel}). Observe that the rows $R_1,R_2,R_3, R_5, R_9, R_{11}$ form a basis for
	the rowspace of (\ref{handel}). We have the following relations for the rows of (\ref{handel}):
	\begin{enumerate}
		\item $R_4=-R_1-R_2-R_3$
		\item $R_6=-R_5$
		\item $R_7=-R_5$
		\item $R_8=-R_5$
		\item $R_{10}=-R_9$
		\item $R_{12}=-R_{11}$
	\end{enumerate}
	
	We now represent $R_1,R_2,R_3, R_5, R_9, R_{11}$ respectively by $v_1,v_2,v_3,v_4,v_5,v_6$. The linear combination above, then contributes the following edges
	to the coordinate graph $G$ of $R$.
	
	\begin{enumerate}
		\item $R_4$: $(v_1,v_2)$, $(v_1,v_3)$, $(v_2,v_3)$ (see Figure \ref{handel:fig})
	\end{enumerate}
	
	\begin{figure}
		\begin{center}
			\includegraphics[width=17cm,height=6cm,keepaspectratio]{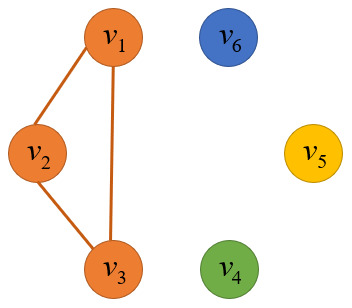}
			\caption{Coordinate graph for the Handel CRN}
			\label{handel:fig}
		\end{center}
	\end{figure}
	
	This gives us four connected components for $G$ which induces the following independent decomposition of $R$.
	
	\begin{enumerate}
		\item $P_{1}=\{ R_1,R_2,R_3,R_4\}$
		\item $P_2=\{R_5,R_6,R_7,R_8\}$
		\item $P_3=\{R_9,R_{10}\}$
		\item $P_4=\{R_{11},R_{12}\}$
	\end{enumerate}
	
	Hence, we have $\mathscr{N}=\{R_i:i=1,...,12\}$ with the following independent decomposition: $\{\mathscr{N}_1,\mathscr{N}_2,\mathscr{N}_3,\mathscr{N}_4\}$ where $\mathscr{N}_1=\{R_1,R_2,R_3,R_4\}$, $\mathscr{N}_2=\{R_5,R_6,R_7,R_8\}$, $\mathscr{N}_3=\{R_9,R_{10}\}$, and $\mathscr{N}_4=\{R_{11},R_{12}\}$.
	\begin{table}
		\centering
		\caption{Network Numbers for Handel CRN}
		\label{network:handel}       % Give a unique label
		% For LaTeX tables use
		\begin{tabular}{lrrrrr}
			\noalign{\smallskip}\hline\noalign{\smallskip}
			& $\mathscr{N}$  & $\mathscr{N}_1$ & $\mathscr{N}_2$& $\mathscr{N}_3$ & $\mathscr{N}_4$\\
			\noalign{\smallskip}\hline\noalign{\smallskip}
			\# species & 7 & 5 & 4 &2 &2\\
			\# complexes & 14 & 6 & 8 & 4 & 4\\
			\# reactions & 12 & 4 & 4 & 2 & 2\\
			\# irreversible reactions & 12 & 4 & 4 & 2 & 2\\
			\# linkage classes & 2 & 2 &4 &2 & 2\\
			rank of network & 6 & 3 & 1 & 1 & 1\\
			deficiency & 6 & 1 &3 & 1 & 1\\
			\noalign{\smallskip}\hline\noalign{\smallskip}
		\end{tabular}
	\end{table}
	We consider the subnetwork $\mathscr{N}_4$. Note that its deficiency is 1. 
	We can actually use the Multistationarity Algorithm in \cite{fortun,hmr2019,ji}. For other kinetics, such as Hill-type, one may use the method given in \cite{hernandez:jomc2}.
	We will not show the details here as it requires a lot of preliminaries. Since the preliminary analysis is not satisfied, it has no capacity for multistationarity. It follows from the FDT that the whole network does not also have the capacity for multistationarity.
\end{example}

{
	\subsection{Additional Examples}
	We now consider two other examples. The first example is the use of our results for indeed a large network. The CRN in Example \ref{large:network} has 42 reactions. The other example provides a case when you can show that when a vector is a steady state of each system under independent decomposition, it is also a steady state of the whole system with the underlying network.
	
	\begin{example}
		\label{large:network}
		Consider a generalized mass action model of purine metabolism in man \cite{arceo2015,voit}. We take into consideration the total representation with the following reactions provided in the supplementary materials of Arceo et al. \cite{AJLM2017}.\\
		${R_1}:{X_1} + {X_{17}} + {X_4} + {X_8} + {X_{18}} \to 2{X_1} + {X_4} + {X_8} + {X_{18}}$\\
		${R_2}:{X_1} \to 0$\\
		${R_3}:{X_1} + {X_2} + {X_4} + {X_8} + {X_{18}} \to 2{X_2} + {X_4} + {X_8} + {X_{18}}$\\
		${R_4}:{X_1} + {X_4} + {X_6} \to 2{X_4} + {X_6}$\\
		${R_5}:{X_1} + {X_4} + {X_6} \to 2{X_4} + {X_1}$\\
		${R_6}:{X_1} + {X_2} + {X_{13}} \to 2{X_2} + {X_{13}}$\\
		${R_7}:{X_1} + {X_2} + {X_{13}} \to {X_1} + 2{X_2}$\\
		${R_8}:{X_1} + {X_8} + {X_{15}} \to 2{X_8} + {X_{15}}$\\
		${R_9}:{X_1} + {X_8} + {X_{15}} \to 2{X_8} + {X_1}$\\
		${R_{10}}:{X_2} + {X_4} + {X_8} + {X_{18}} \to {X_3} + {X_4} + {X_8} + {X_{18}}$\\
		${R_{11}}:{X_2} + {X_7} + {X_8} \to 2{X_7} + {X_8}$\\
		${R_{12}}:{X_2} + {X_{18}} \to {X_{13}} + {X_{18}}$\\
		${R_{13}}:{X_4} + {X_8} + {X_{18}} \to {X_2} + {X_8} + {X_{18}}$\\
		${R_{14}}:{X_2} + {X_4} + {X_8} + {X_7} \to 2{X_2} + {X_4} + {X_7}$\\
		${R_{15}}:{X_3} + {X_4} \to 2{X_4}$\\
		${R_{16}}:{X_{11}} \to {X_4}$\\
		${R_{17}}:{X_5} \to {X_4}$\\
		${R_{18}}:{X_4} + {X_5} \to 2{X_5}$\\
		${R_{19}}:{X_4} + {X_9} + {X_{10}} \to 2{X_9} + {X_{10}}$\\
		${R_{20}}:{X_4} \to {X_{13}}$\\
		$R_{21}: X_4 + X_8 \to X_8 + X_{11}$\\
		$R_{22}: X_4 + X_8 \to X_4 + X_{11}$\\
		$R_{23}: X_5 \to X_6$\\
		$R_{24}: X_6 \to 0$\\
		$R_{25}: X_4 + X_7 \to X_4 + X_8$\\
		$R_{26}: X_{11} \to X_{8}$\\
		$R_{27}: X_8 + X_{18} \to X_{15} + X_{18}$\\
		$R_{28}: X_8 + X_9 + X_{10} \to X_9 + 2X_{10}$\\
		$R_{29}: X_{12} \to X_9$\\
		$R_{30}: X_9 + X_{10} \to X_{12} + X_{10}$\\
		$R_{31}: X_9 + X_{10} \to X_{12} + X_9$\\
		$R_{32}: X_9 \to X_{13}$\\
		$R_{33}: X_{12} \to X_{10}$\\
		$R_{34}: X_{10} \to X_{15}$\\
		$R_{35}: X_{13} \to X_{14}$\\
		$R_{36}: X_{13} \to 0$\\
		$R_{37}: X_{15} \to X_{14}$\\
		$R_{38}: X_{14} \to 0$\\
		$R_{39}: X_{14} \to X_{16}$\\
		$R_{40}: X_{16} \to 0$\\
		$R_{41}: 0 \to X_{17}$\\
		$R_{42}: 0 \to X_{18}$\\
		We let $\mathscr{N}$ to be the network with 42 reactions and consider $\mathscr{N}'=\{R_{42}\}$. We can show that the decomposition resulting to $\mathscr{N}'$ and its complement with respect to the whole network is independent. Now, the subnetwork $\mathscr{N}'$ has deficiency zero and not weakly reversible. We can use the Deficiency Zero Theorem and the FDT to analyze the network.
	\end{example}

	\begin{example}
		Consider the following reaction network $\mathscr{N}$ governed by mass action kinetics:
		\[\begin{array}{l}
			{R_1}:{X_1} + {X_2} \to {X_3}\\
			{R_2}:{X_3} + {X_4} \to {X_4} + {X_1} + {X_2}\\
			{R_3}:{X_1} + {X_3} \to {X_4} + {X_1} + {X_3}\\
			{R_4}:{X_4} + 2{X_2} \to 2{X_2}
		\end{array}\]
		The vector $(2,3,3,2)$ is a steady state of both independent subnetworks $\mathscr{N}_1=\{R_1,R_2\}$ and $\mathscr{N}_2=\{R_3,R_4\}$ with $k_1=k_2=k_4=1$ and $k_3=3$. The vector $(2,3,3,2)$ is also a steady state of the whole network with the specified rate constants.
	\end{example}
}

\section{Summary and Outlook}
\label{sec:sum}
We provided a characterization, i.e., necessary and sufficient condition for the existence of a nontrivial independent decompositions of a CRN, and a novel step-by-step method to obtain such nontrivial independent decompositions, if it exists.
Moreover, we have shown that a CRN of a popular model of anaerobic yeast fermentation pathway has a nontrivial independent decomposition while a metabolic network with one positive feedforward and a negative feedback has none.
We also analyzed the properties of steady states of reaction networks of influenza virus models, in particular, the Baccam and Handel models.
For future work, one can look into the exact number of independent decompositions of a CRN. In addition, one can consider at refinements and coarsenings of such decompositions.
An analogue for incidence independent decompositions can also be explored.

\subsection*{Acknowledgements}
	This work was funded by the UP System Enhanced Creative Work and Research Grant (ECWRG 2020). The authors thank the reviewers for their comments and suggestions that really helped in the improvement of our paper. BSH is grateful to Dr. Eduardo R. Mendoza for introducing him to decompositions in Chemical Reaction Network Theory (CRNT) and sending lecture notes about CRNT that became helpful in this work.

% BibTeX users please use one of
%\bibliographystyle{spbasic}      % basic style, author-year citations
%\bibliographystyle{spmpsci}      % mathematics and physical sciences
%\bibliographystyle{spphys}       % APS-like style for physics
%\bibliography{}   % name your BibTeX data base

% Non-BibTeX users please use

\appendix
\section{Nomenclature}
\subsection{List of abbreviations}
%\FloatBarrier
%\begin{table}
% table caption is above the table
%\caption{caption}
%\label{tab:ap1}       % Give a unique label
% For LaTeX tables use
%\centering
\begin{tabular}{ll}
	%\hline\noalign{\smallskip}
	%first & second \\
	\noalign{\smallskip}\hline\noalign{\smallskip}
	Abbreviation& Meaning \\
	\noalign{\smallskip}\hline\noalign{\smallskip}
	CRN& chemical reaction network\\
	CRNT& chemical reaction network theory\\
	DZT& deficiency zero theorem\\
	FDT& Feinberg decomposition theorem\\
	MAK& mass action kinetic(s)\\
	MSA& multistationarity algorithm\\
	ODE& ordinary differential equation\\
	PLK& power-law kinetic(s)\\
	SFRF& species formation rate function\\
	\noalign{\smallskip}\hline
\end{tabular}
%\end{table}
%\FloatBarrier
\subsection{List of important symbols}
% table caption is above the table
%\caption{caption}
%\label{tab:ap2}       % Give a unique label
% For LaTeX tables use
%\centering
\begin{tabular}{ll}
	%\hline\noalign{\smallskip}
	%first & second \\
	\noalign{\smallskip}\hline\noalign{\smallskip}
	Meaning& Symbol \\
	\noalign{\smallskip}\hline\noalign{\smallskip}
	deficiency& $\delta$  \\
	dimension of the stoichiometric subspace& $s$   \\
	incidence matrix& $I_a$\\
	molecularity matrix& $Y$\\
	stoichiometric matrix& $N$\\
	stoichiometric subspace& $S$\\
	\noalign{\smallskip}\hline
\end{tabular}

\end{document}